\newtheorem{theorem}{Theorem}
\newtheorem{lemma}[theorem]{Lemma}
\newtheorem{cor}[theorem]{Corollary}
\newtheorem{remark}{Remark}
\newcommand{\R}{\mathbb{R}}
\newcommand{\x}{\mathbf{x}}
\newcommand{\y}{\mathbf{y}}
\renewcommand{\u}{\mathbf{u}}
\renewcommand{\v}{\mathbf{v}}
\newcommand{\w}{\mathbf{w}}
\renewcommand{\H}{\mathcal{H}}
\def\ya{-2.341266903873789}
\def\yb{1.5048776259025467}
\def\ca{3.564567562538916}
\def\cb{8.108261964835723}
\def\Pa{1.6179775280898874}
\def\Pb{1.6179775280898874}
\def\sp{-0.8092982533048366}
\def\val{-0.045636016791851475}
\def\spb{-1.9817233219729211}
\def\valb{23.374359727108025}
\def\ga{1.51}
\def\xmin{-2.2}
\def\xmax{.5}
\title{%
Minimax Adaptive Estimation for Finite Sets of Linear Systems
}
\author{Olle Kjellqvist and Anders Rantzer
\footnote{We are grateful to our colleague Dr. Carolina Bergeling for comments that greatly improved the manuscript. This project has received funding from the European Research Council (ERC) under the European Union's Horizon 2020 research and innovation programme under grant agreement No 834142 (ScalableControl). Both authors are with the Department of Automatic Control, Lund University, Lund, Sweden.%
}
}
\begin{document}

\maketitle
\thispagestyle{empty}
\pagestyle{empty}

\begin{abstract}
	For linear time-invariant systems with uncertain parameters belonging to a finite set, we present a purely deterministic approach to multiple-model estimation and propose an algorithm based on the minimax criterion using constrained quadratic programming. The estimator tends to learn the dynamics of the system, and once the uncertain parameters have been sufficiently estimated, the estimator behaves like a standard Kalman filter. 
\end{abstract}

\section{INTRODUCTION}
	\subsection{Problem Statement}
	In this article, we consider output prediction for linear systems of the form
\begin{equation}
        \begin{aligned}
        \label{eq:dynamics}
        x_{t+1} & = Fx_t + Gu_t + w_t \\
        y_t & = Hx_t + v_t, \qquad 0 \leq t \leq N-1, \\
        \end{aligned}
\end{equation}
where $x_t\in \R^n$, $u_t\in \R^p$ and $y_t\in \R^m$ are the states and the measured input and output at time-step $t$, respectively. $w_t\in\R^n$ and $v_t\in\R^m$ are unmeasured process disturbance and measurement noise. The model, $(F, H, G)$ is fixed but unknown, belonging to some finite set 
\[
	\{(F_1, H_1, G_1), \cdots ,(F_K, H_K, G_K)\}.
\]
consiting of of triplets of real-valued matrices. In particular, we are interested in strictly causal estimation of $y_N$, such that the gain from disturbance trajectories $(w_t, v_t)_{t=0}^{N-1}$ to pointwise estimation error $(y_N - Hx_N)$ in some weigthed $\ell_2$-norm is bounded by a constant $\gamma_N > 0$. This means that given positive definite matrices $P_0 \in \R^{n \times n }$, $R \in \R^{m\times m}$ and $Q\in \R^{n\times n}$ and a nominal value of the initial state, $\hat x_0, $ 
\begin{equation}
	\frac{|\hat y_N - Hx_N|^2}{|x_0 - \hat x_0|^2_{P_0^{-1}} + \sum_{t=0}^{N-1}\left(|w_t|_{Q^{-1}}^2 + |v_t|_{R^{-1}}^2\right)} \leq \gamma_N^2,
	\label{eq:opnorm}
\end{equation}
should hold for all disturbances and models compatible with the measurement history $(y_t, u_t)_{t=0}^{N-1}$. This approach is different from the Bayesian approach to filtering where one takes the conditional expectation as the estimate $\hat y_N$.
The interest in worst-case gain is motivated by robust feedback-control from estimates. 
In such settings instability or lack of performance due to model errors is a larger concern than robustness to outliers.
\subsection{Background}

	Simultaneous estimation of states and parameters in linear systems is a bilinear estimation problem. The Maximum-likelihood approach leads to estimates which cannot be put in recursive form and must be obtained by iteration~\cite{Bar1972}. A recursive method can be obtained by parametrizing the dynamical equations and the observer and learning the parameters using the \emph{sequential prediction error} approach. Alternatively, one can augment the state vector with the uncertain parameters and apply nonlinear filtering methods such as the Extended Kalman filter~\cite{Goodwin1984}. Unfortunately, optimality guarantees for such methods are difficult to obtain. One exception is when the system can be modeled as a finite set of linear systems and the noise is Gaussian, then the Maximum-likelihood estimates can be put on a recursive form~\cite{Crassidis2011}.

Solutions based on the multiple-model approach have been tremendously successful in modeling and estimating complex engineering systems. In essence, it consists of two parts: 1) design simpler models for a finite set of possible operating regimes. 2) Run a filter for each model and cleverly combine the estimates. Multiple-model adaptive estimation has been around since the '60s \cite{Magill1965, Lainiotis1971} and has been an active research field since. The estimation approach easily extends to systems where the active model can switch (hybrid systems) by matching a Kalman filter with each possible trajectory. In that case, the number of filters will grow exponentially, which has sparked research into more efficient methods. Notable numerically tractable and suboptimal algorithms for estimation in hybrid systems are the Generalized Pseudo Bayesian~\cite{Ackerson1970, Chang1978}, and the Interacting Multiple Model~\cite{Blom1988}. The algorithms have been coupled with extended and unscented Kalman filters to deal with non-linear systems~\cite{Akca2019}, and \cite{Xiong2015} studied robustness to identification error. In \cite{Guo2008}, the authors pointed out that methods based on Kalman filters are sensitive to noise distributions and proposed an Interactive Multiple Model algorithm based on particle filters to handle non-Gaussian noise at the expense of a 100 fold increase in computation. Recently, machine-learning approaches to classification have been combined with the Interacting Multiple Model estimator \cite{Li2021, Deng2020} and showed improved accuracy in simulations.

The Bayesian approach to the \emph{Multiple-model} estimation problem involves assigning probability distributions to disturbances $(w_t, v_t)$ and models $(F, G, H)$. The estimate is taken as the expected value of $y_N$ conditioned on past measurements. If the disturbances are zero-mean and Gaussian, then the conditional expectation can be computed as the weighted average of Kalman filter estimates (one for each model), weighted by the conditional probability that its model is active.

It is evident in practice that the estimator's performance depends on the quality of the model set. The models must be distinguishable using measured signals, and the models should accurately describe the operating regimes. Since the estimates can be susceptible to non-Gaussian noise, it is surprising that deterministic approaches similar to those studied by the control community in the '80s and '90s have gathered little attention. Recent progress to minimax adaptive control of linear systems with uncertain parameters belonging to a finite set \cite{Rantzer2021} under the assumption of \emph{perfect measurements} has inspired this research into compatible estimation techniques.

\subsection{Contribution}
	In this paper, we formulate the multiple-model estimation problem as a deterministic, two-player dynamic game. In particular, this formulation allows for online computation of the worst-case gain from disturbances to estimation error and tractable synthesis of suboptimal estimators that minimize the worst-case gain. Deterministic dynamic games have played a key role in solving and understanding $\H_\infty$ filtering~\cite{Shen97, Basar95}; our goal in this work has been to take a first step towards extending the advantages of that framework to the multiple model setting. 

\subsection{Outline}
The outline is as follows: First, we introduce notation in Section~\ref{sec:notation}, then we introduce minimax multiple-model filtering and the main results in Section~\ref{sec:minimax}. In Section~\ref{sec:limit}, we present a simplified form for time-invariant systems. We illustrate the theory through a numerical example in Section~\ref{sec:example}. Section~\ref{sec:conclusions} contains concluding remarks, and supporting lemmata are given in the Appendix.
\section{NOTATION}
\label{sec:notation}
The set of $n \times m$-dimensional matrices with real coefficients is denoted $\R^{n\times m}$. The transpose of a matrix $A$ is denoted $A^\top$. For a symmetric matrix $A \in \R^{n\times n}$, we write $A \succ (\succeq) 0$ to say that $A$ is positive (semi)definite. Given $x\in \R^n$ and $A\in \R^{n\times n}$, $|x|^2_A := x^\top A x$. For a vector $x_t \in \R^n$ we denote the sequence of such vectors up to time $t$ by $\x^t := (x_k)_{k=0}^{t}$.
\section{MINIMAX MULTIPLE MODEL FILTERING}
\label{sec:minimax}
	In contrast to the Bayesian approach, our approach is fully deterministic; similarly to \cite{Shen97, Basar95}, we do not make explicit assumptions on the distribution of the noise trajectories $\w^t$ and $\v^t$. We will instead construct a two-player dynamic game between a minimizing player that chooses the estimate, and a maximizing player that chooses dynamics and disturbances. Recall that we are interested in characterizing an estimator $\hat y_N$ such that the gain from disturbances to the pointwise estimation error is bounded by $\gamma_N$. I.e., \eqref{eq:opnorm} holds for all disturbances consistent with \eqref{eq:dynamics} and the data $(\y^{N-1}, \u^{N-1})$. Since the disturbances are unknown, we cannot evaluate \eqref{eq:opnorm} directly. However, define
\begin{multline}
	J_N(\y^{N-1}, \u^{N-1}, \hat y_N) := \sup_{x_0, \w^{N-1}, \v^{N-1}, (F, G, H)}\Bigg\{ |\hat y_N - Hx_N|^2 \\
	- \gamma_N^2\left(|x_0 - \hat x_0|^2_{P_0^{-1}} + \sum_{t=0}^{N-1} \bigg(|w_t|^2_{Q^{-1}} + |v_t|^2_{R^{-1}}\bigg)\right)\Bigg\},
        \label{eq:cost_dist}
\end{multline}
	where the maximization is performed subject to the constraints \eqref{eq:dynamics}. Then \eqref{eq:opnorm} holds if and only if
	\[
	J_N(y^{N-1}, u^{N-1}, \hat y_N) \leq 0.
	\]
In this setting, $w_t = x_{t+1} - Fx_t - Gu_t$ and $v_t = y_t - Hx_t$ are uniquely determined by the states, the measurements and the active model. Inserting into \eqref{eq:cost_dist}, we get
\begin{multline}
	J_N(\y^{N-1}, \u^{N-1}, \hat y_N)= \sup_{\x^{N}, (F,G,H)}\Bigg\{ |\hat y_N - Hx_N|^2 -\gamma_N^2|x_0 - \hat x_0|^2_{P_0^{-1}} \\
	-\gamma_N^2 \sum_{t=0}^{N-1}\bigg ( |x_{t+1} - Fx_t - Gu_t|^2_{Q^{-1}} + |y_t - Hx_t|^2_{R^{-1}}\bigg )\Bigg\}.
	\label{eq:cost}
\end{multline}

	We will call an estimator $\hat y_N^\star$ a minimax estimator if
\begin{multline}
	\inf_{\hat y_N}J_N(y^{N-1}, u^{N-1}, \hat y_N) = J_N(y^{N-1}, u^{N-1}, \hat y_N^\star)=: J_N^\star(y^{N-1}, u^{N-1}),
	\label{eq:minimaxcost}
\end{multline}
holds, where $\hat y_N$ are functions of past data $\y^{N-1}$ and $u^{N-1}$. This constitutes a two-player dynamic game and would be linear quadratic if not for the model being chosen by the maximizing player.
The intuition behind \eqref{eq:minimaxcost} makes sense in the following way. The minimizing player is penalized for deviating from the true (noiseless) output, and the maximizing player is penalized for selecting a model which requires large disturbances $w$ and $v$ to be compatible with the data. As $N$ increases, the penalty for selecting a model different from the truth grows too large, resulting in a learning mechanism. 
It turns out that the cost associated with the disturbance trajectories required to explain each model corresponds to the accumulated prediction errors from a corresponding Kalman filter and that the minimax estimate is a weighted interpolation between the Kalman filter estimates.
\begin{theorem}
        \label{thm:MME}
	Consider matrices $F_1,\ldots,F_K \in \R^{n\times n}$, $H_1,\ldots,H_K \in \R^{m\times n}$, $G_1,\ldots, G_K \in \R^{n\times p}$  and positive definite $Q, P_0\in \R^{n\times n},\ R \in \R^{m\times m}$. Define $P_{t,i}$ according to 
	\[
	\begin{split}
		& P_{0,i} = P_0\\
		& P_{t+1,i} = Q + F_i(P_{t, i}- P_{t,i}H_i^\top(R + H_iP_{t,i}H_i^\top)^{-1}H_i P_{t,i})F_i^\top,
	\end{split}
\]
	and assume that $H_iP_{N,i}H_i^\top\prec \gamma_N^2I$. Then the cost \eqref{eq:cost} is equivalent to
        \begin{multline}
		J_N(\y^{N-1}, u^{N-1}, \hat y_N) =  \max_i \left\{|\hat y_N - H_i\breve x_{N,i}|^2_{(I-\gamma_N^{-2}H_iP_{N,i}H_i^\top)^{-1}} - \gamma_N^2 c_{N,i}\right\}.
		\label{eq:minimax}
        \end{multline}
	$\breve x_{N,i}$ is the Kalman filter estimate of $x_N$ using the $i$th model, and $c_{N,i}$ are generated according to
        \begin{align*}
                \breve x_{0,i}       & = x_0  \\
		\breve{x}_{t+1,i}   & = F_i \breve{x}_{t,i} + K_{t,i}(y_t - H_i\breve{x}_{t,i}) + G_iu_t\\
                K_{t,i}     & = F_iP_{t,i}H_i^\top(R + H_i P_{t,i} H_i^\top)^{-1} \\
                c_{0,i} & = 0 \\
                c_{t+1,i} & = |H_i\breve x_{t,i} -y_t|^2_{(R + H_iP_{t,i}H_i^\top)^{-1}} + c_{t,i}.
        \end{align*}

\end{theorem}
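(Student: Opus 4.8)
The natural route is forward dynamic programming on the cost \eqref{eq:cost}, tracking, for each model index $i$, the running value function as a function of the current state $x_t$. Define
\[
  V_{t,i}(x) = \max_{x_0,\x^{t}} \Bigg\{ -\gamma^2|x_0-\hat x_0|_{P_0^{-1}}^2 - \gamma^2\sum_{k=0}^{t-1}\left(|x_{k+1}-F_ix_k|_{Q^{-1}}^2 + |y_k - H_ix_k|_{R^{-1}}^2\right) : x_t = x\Bigg\},
\]
the best (least-penalized) way for the maximizer to route the trajectory of model $i$ to the point $x$ at time $t$ while remaining consistent with the data $\y^t$. The plan is to show by induction that $V_{t,i}(x) = -|x - \breve x_{t,i}|_{P_{t,i}^{-1}}^2 - \gamma^2 c_{t,i}$, i.e. a quadratic in $x$ centred at the Kalman estimate $\breve x_{t,i}$ with curvature $P_{t,i}^{-1}$ and a state-independent offset $-\gamma^2 c_{t,i}$. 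The base case $t=0$ is immediate from the definitions $\breve x_{0,i}=x_0$, $P_{0,i}=P_0$, $c_{0,i}=0$.

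The inductive step is the heart of the argument. Writing the recursion
\[
  V_{t+1,i}(x) = \max_{z}\Big\{ V_{t,i}(z) - \gamma^2|x - F_iz|_{Q^{-1}}^2 - \gamma^2|y_t - H_iz|_{R^{-1}}^2 \Big\},
\]
one substitutes the inductive hypothesis and completes the square in $z$. This is a routine but lengthy quadratic maximization; the coefficient of $-|\cdot|^2$ in the optimized expression, after eliminating $z$, is exactly the Riccati update for $P_{t+1,i}$ (including the measurement-update term $|y_t-H_iz|_{R^{-1}}^2$ folded in via the matrix inversion lemma), the new centre is the Kalman recursion $\breve x_{t+1,i} = F_i\breve x_{t,i} + K_{t,i}(y_t - H_i\breve x_{t,i})$ with the stated gain, and the increment in the offset is precisely $\gamma^2|H_i\breve x_{t,i} - y_t|_{(R+H_iP_{t,i}H_i^\top)^{-1}}^2$, giving the $c_{t+1,i}$ recursion. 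I would isolate the purely algebraic identities — the two-step "prediction then correction" completion of squares and the Woodbury manipulation turning $(Q + F_i P_{t,i}F_i^\top)$-type expressions into the filter form — into a supporting lemma in the Appendix, since the paper advertises exactly such lemmata.

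With the value functions in hand, the minimax value of \eqref{eq:cost} is
\[
  \min_{\hat y_N}\max_i \Big\{ \max_{x_N}\big[ |\hat y_N - H_ix_N|^2 + V_{N,i}(x_N)\big]\Big\}
  = \min_{\hat y_N}\max_i\Big\{ \max_{x_N}\big[ |\hat y_N - H_ix_N|^2 - |x_N - \breve x_{N,i}|_{P_{N,i}^{-1}}^2\big] - \gamma^2 c_{N,i}\Big\}.
\]
The inner maximization over $x_N$ of $|\hat y_N - H_ix_N|^2 - |x_N - \breve x_{N,i}|_{P_{N,i}^{-1}}^2$ is a concave-in-$x_N$ quadratic precisely when $H_i^\top H_i \prec P_{N,i}^{-1}$ after the appropriate scaling — this is where the hypothesis $H_iP_{N,i}H_i^\top \prec \gamma^2 I$ enters (it is the condition guaranteeing a finite maximum once the outer problem is normalized by $\gamma$); completing the square once more collapses the state dependence and leaves $|\hat y_N - H_i\breve x_{N,i}|_{(I - \gamma^{-2}H_iP_{N,i}H_i^\top)^{-1}}^2 - \gamma^2 c_{N,i}$, which is \eqref{eq:minimax}. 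The main obstacle I anticipate is purely bookkeeping: carrying the $\gamma^2$ factors correctly through the nested completions of squares and verifying that the Schur-complement / matrix-inversion-lemma steps are valid under exactly the stated positive-definiteness and the terminal condition $H_iP_{N,i}H_i^\top \prec \gamma^2 I$, rather than any genuine conceptual difficulty. Finally, the claim that $\hat y_N^\star$ is "the maximizing argument of \eqref{eq:minimax}" is immediate once the inner $\min_{\hat y}\max_i$ problem is recognized as a finite minimax over $K$ quadratics in $\hat y$, whose solution is characterized by the active set achieving the maximum.
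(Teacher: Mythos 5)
Your proposal follows essentially the same route as the paper: a forward dynamic-programming induction showing the accumulated past cost for each model is a quadratic in $x_N$ centred at the Kalman estimate $\breve x_{N,i}$ with curvature governed by the Riccati recursion and offset $c_{N,i}$ (the paper's Lemma~\ref{lemma:mvn_1}), followed by a concave quadratic maximization over $x_N$ under $H_iP_{N,i}H_i^\top \prec \gamma^2 I$ that collapses the state dependence (the paper's Lemma~\ref{lemma:qp1} with $A=H_i$, $X=I$, $Y=P_{N,i}$). The only blemish is the $\gamma^2$ bookkeeping you already flag yourself: with your sign convention the quadratic term of $V_{t,i}$ should carry the factor $\gamma^2$, i.e. $V_{t,i}(x) = -\gamma^2|x-\breve x_{t,i}|^2_{P_{t,i}^{-1}} - \gamma^2 c_{t,i}$, which is exactly what makes the concavity condition come out as $H_iP_{N,i}H_i^\top \prec \gamma^2 I$ and the final weight $(I-\gamma^{-2}H_iP_{N,i}H_i^\top)^{-1}$.
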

\begin{proof}
		We will perform the maximization over state-trajectories in \eqref{eq:cost} in two steps. First over past trajectories ($\x^{N-1}$) and then over the future state $x_N$\footnote{ $\max_{\x^N}\{\ldots\} = \max_{x_N}\left \{\max_{\x^{N-1}}\{\ldots\}\right\}$.}. The right-hand side of \eqref{eq:cost} becomes 
	\begin{multline*}
		\sup_{x_N, i}\Bigg\{ |\hat y_N - H_ix_N|^2 -\gamma_N^2\inf_{\x^{N-1}}\bigg\{|x_0 - \hat x_0|^2_{P_0^{-1}} \\
		+ \sum_{t=0}^{N-1}\big (|x_{t+1} - F_ix_t - G_iu_t|^2_{Q^{-1}} + |y_t - H_ix_t|^2_{R^{-1}}\big )\bigg\}\Bigg\},
	\end{multline*}
	where $i = 1,\ldots K$ is an index for the active model $(F_i, H_i, G_i)$. Apply Lemma~\ref{lemma:mvn_1} to get
        \begin{multline}
		J_N(y^{N-1}, u^{N-1}, \hat y_N) =\sup_{x_N,i} \left\{|\hat y_N - H_ix_N|^2 - \gamma_N^2 V_{N,i}((x_N, y^{N-1})\right\}\nonumber \\
		 = \sup_{i,x_N} \left \{|\hat y_N - H_ix_N|^2 - \gamma_N^2\left(|x_N - \breve x_N|_{P^{-1}_{N,i}}^2 + c_{N,i}\right)\right\}.
                \label{eq:fcn_of_V}
        \end{multline}
	For fix $\hat y_N$ and $i$, the assumption $H_iP_{N,i}H_i^\top \prec \gamma_N^2I$ guarantees that we maximize a concave function of $x_N$ and we apply Lemma~\ref{lemma:qp1} with $A = H_i,\ X = I,\ Y = P_{N,i}$ to conclude\footnote{The maximizing argument is given by $x_N^\star (\hat y_N, i) = (H_i^\top H_i- \gamma_N^2P^{-1}_{N,i})^{-1}(H_i^\top\hat y_N - P^{-1}_{N,i}\gamma_N^2 \breve x_{N,i})$},
        \begin{equation*}
		J_N(y^{N-1}, u^{N-1}, \hat y_N)	= \max_i |\hat y_N - H_i\breve x_{N,i}|^2_{(I-\gamma_N^{-2}H_iP_{N,i}H_i^\top)^{-1}} - \gamma_N^2 c_{N,i}.
        \end{equation*}
\end{proof}

\begin{remark}
	Theorem~\ref{thm:MME} holds also for time-varying systems, if $F_i$ and $H_i$ are replaced  by $F_{t,i}$ and $H_{t,i}$. Further, $P_0$, $Q$ and $R$ can be time-varying and differ between models.
\end{remark}
	\begin{remark}
		Equation \eqref{eq:minimax} is monotonically increasing in $\gamma_N$ and the smallest $\gamma^\star_N$ such that $J_N(y^{N-1}, u^{N-1}, \hat y_N) \leq 0$ can be found efficiently through bisection.
	\end{remark}
	The below Corollary follows from Theorem~\ref{thm:MME} and describes how to compute the minimax estimator as a convex quadratic program.
	\begin{cor}
		With assumptions as in Theorem~\ref{thm:MME}, consider the convex program 
		\[
			\begin{aligned}
				\underset{\hat y_N, t}{\text{minimize}}\quad & t \\
				\text{subject to:} \quad & |\hat y_N - H_i\breve x_{N,i}|^2_{(I - \gamma_N^{-2}H_iP_{N,i}H_i^\top)^{-1}} - \gamma_N^2 c_{N,i} \leq t \\
				& \forall i = 1\ldots K.
			\end{aligned}
		\]
	The minimizing argument $\hat y_N^\star$ satisfies \eqref{eq:minimaxcost}.
		\label{cor:MME}
	\end{cor}
\begin{remark}
        If the model set is a singleton, then $\hat y^\star_N = Hx_N^\star = H\breve x_N$ is the estimate generated by the Kalman filter, which is a well known result~\cite{Basar95}.
\end{remark}
\subsection{On $c_{N,i}$ and the relation to conditional probability.}

It is known (see for instance~\cite{Crassidis2011}) that if $w_t$ and $v_t$ are uncorrelated Gaussian white noise with covariances $Q$ and $R$, the conditional probability that the measured output $\y^N$ has been generated by the model $(F_i, G_i, H_i)$ and the input $\u^N$ can be expressed as
\[
	p(i|\y^N, \u^N) = \frac{\alpha_N e^{-|y_N - H_i\breve x_{N,i}|^2_{\tilde R_{N,i}}}}{\det(2\pi\tilde R_{N,i})^{1/2}} p(i|\y^{N-1}, \u^{N-1}).
\]
$\alpha_N$ is some normalization constant independent of $i$, and
\[
	\tilde R_{N,i} = R + H_iP_{N,i}H_i^\top,
\]
with $P_{N,i}$ as in Theorem~\ref{thm:MME}. Taking $c_{N,i}$ as in Theorem~\ref{thm:MME} we see that the conditional probability is proportional to $e^{-c_{N+1, i}}$,
\[
	p(i|\y^{N-1}, \u^{N-1}) \propto e^{-c_{N+1,i}}\prod_{t=1}^N \det(2\pi\tilde R_{t,i})^{-1/2}.
\]

\section{STATIONARY SOLUTION}
\label{sec:limit}
For a set of time-invariant systems, we summarize a simple version of the filter in the below theorem.
\begin{theorem}
        \label{thm:limit}
	Consider matrices $F_1,\ldots,F_K \in \R^{n\times n}$, $H_1,\ldots,H_K \in \R^{m\times n}$ and positive definite $Q, P_0\in \R^{n\times n},\ R \in \R^{m\times m}$. Assume that the algebraic Riccati equations
        \begin{equation*}
			P_i  = Q + F_i(P_i  - P_iH_i^\top(R + H_iP_iH_i^\top)^{-1}H_i P_i)F_i^\top,
        \end{equation*}
        have solutions $H_iP_iH_i^\top\prec \gamma_N^2I$. Then a minimax strategy $\hat y^\star_N$ for the game defined by
\begin{multline*}
\min_{\hat y_N}\max_{\x^N, i}\Bigg\{ |\hat y_N - H_ix_N|^2 -\gamma_N^2|x_0 - \hat x_0|^2_{P_i^{-1}} \\
	-\gamma_N^2 \sum_{t=0}^{N-1}\bigg ( |x_{t+1} - F_ix_t - G_iu_t|^2_{Q^{-1}} + |y_t - H_ix_t|^2_{R^{-1}}\bigg )\Bigg\},
\end{multline*}
	and \eqref{eq:dynamics}, is the minimizing argument of 
	\[
		\min_{\hat y_N} \max_i \left\{|\hat y_N - H_i\breve x_{N,i}|^2_{(I-\gamma_N^{-2}H_iP_iH_i^\top)^{-1}} - \gamma_N^2 c_{N,i}\right\}.
	\]
	$\breve x_{N,i}$ is the Kalman filter estimate of $x_N$ using the $i$th model, and $c_{N,i}$ are generated according to
        \begin{align*}
                \breve x_{0,i}       & = x_0  \\
		\breve{x}_{t+1,i}   & = F_i \breve{x}_{t,i} + K_i(y_t - H_i\breve{x}_{t,i})+ G_iu_t\\
                K_i     & = F_iP_iH_i^\top(R + H_i P_i H_i^\top)^{-1} \\
                c_{0,i} & = 0 \\
                c_{t+1,i} & = |H_i\breve x_{t,i} -y_t|^2_{(R + H_iP_iH_i^\top)^{-1}} + c_{t,i}.
        \end{align*}
\end{theorem}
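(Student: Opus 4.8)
The plan is to obtain Theorem~\ref{thm:limit} as a direct specialization of Theorem~\ref{thm:MME}. The only differences between the two games are that the stationary version penalizes the initial state through the model-dependent weight $P_i^{-1}$ instead of a common $P_0^{-1}$, and uses the constant matrices $P_i$ in place of the Riccati iterates $P_{t,i}$. The last remark following Theorem~\ref{thm:MME} already permits the initial weight (as well as $Q$ and $R$) to differ between models, so the idea is to invoke that version of the theorem with the choice $P_{0,i}=P_i$ for each $i=1,\dots,K$.

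First I would verify that this choice makes the Riccati recursion of Theorem~\ref{thm:MME} stationary. By hypothesis each $P_i$ satisfies the algebraic Riccati equation
\[
P_i = Q + F_iP_iF_i^\top - F_iP_iH_i^\top(R + H_iP_iH_i^\top)^{-1}H_iP_iF_i^\top,
\]
which says precisely that $P_i$ is a fixed point of the Riccati difference equation in Theorem~\ref{thm:MME}. Hence, starting from $P_{0,i}=P_i$, an induction on $t$ gives $P_{t,i}=P_i$ for all $0\le t\le N$. Substituting this into the auxiliary quantities of Theorem~\ref{thm:MME}, the Kalman gain collapses to the time-invariant $K_{t,i}=F_iP_iH_i^\top(R+H_iP_iH_i^\top)^{-1}=K_i$, the state recursion becomes $\breve x_{t+1,i}=F_i\breve x_{t,i}+K_i(y_t-H_i\breve x_{t,i})$, and the increments become $c_{t+1,i}=|H_i\breve x_{t,i}-y_t|^2_{(R+H_iP_iH_i^\top)^{-1}}+c_{t,i}$, exactly the recursions in the statement of Theorem~\ref{thm:limit}.

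It then remains to check the hypothesis of Theorem~\ref{thm:MME}, namely $H_iP_{N,i}H_i^\top\prec\gamma^2 I$ for every $i$; by the previous step $P_{N,i}=P_i$, so this reduces to the assumed $H_iP_iH_i^\top\prec\gamma^2 I$. Theorem~\ref{thm:MME}, in its model-dependent-weight, time-invariant-matrices form, then applies verbatim and shows that the minimax strategy is the minimizing argument of $\min_{\hat y}\max_i |\hat y-H_i\breve x_{N,i}|^2_{(I-\gamma^{-2}H_iP_iH_i^\top)^{-1}}-\gamma^2 c_{N,i}$, which is the claimed objective.

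I do not expect a genuine obstacle here, since the statement is essentially a corollary; the one point that needs care is confirming that the proof of Theorem~\ref{thm:MME} — in particular the invocations of Lemma~\ref{lemma:mvn_1} and Lemma~\ref{lemma:qp1} — goes through unchanged when the common $P_0$ is replaced by the model-dependent $P_{0,i}=P_i$, i.e. that the extension asserted in the remark is sound. It is also worth emphasizing that existence of a solution $P_i$ with $H_iP_iH_i^\top\prec\gamma^2 I$ is assumed rather than derived (it is the stabilizing solution of the Riccati equation when $(F_i,H_i)$ is detectable and $\gamma$ is sufficiently large), so no limiting argument $N\to\infty$ is needed: once the initial penalty is taken to be $P_i^{-1}$, the stationary filter is exact for every finite horizon.
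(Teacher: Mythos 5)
Your proposal is correct and follows essentially the same route as the paper, which proves Theorem~\ref{thm:limit} as a special case of Theorem~\ref{thm:MME} obtained by replacing $P_0$ with $P_i$; your additional verification that the algebraic Riccati solution is a fixed point of the recursion (so $P_{t,i}=P_i$ and the hypothesis reduces to $H_iP_iH_i^\top\prec\gamma^2 I$) simply makes explicit what the paper leaves implicit.
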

\begin{proof}
	This is a special case of Theorem~\ref{thm:MME}, by replacing $P_0$ with $P_i$.
\end{proof}

\section{EXAMPLE}
\label{sec:example}
In this example, we compare a minimax estimator synthesized using Corollary~\ref{cor:MME}, bisecting over $\gamma_N$, to find the estimator $\hat y_N^\star$ such that \eqref{eq:opnorm} is satisfied for the smallest possible $\gamma_N$. We compare this to a Bayesian multiple-model estimator~\cite{Crassidis2011} and calculate the corresponding bound $\gamma_N$ using Theorem~\ref{thm:MME} and bisection. Consider the uncertain linear system
\[
	\begin{aligned}
		x_{t+1} & = Fx_t + w_t \\
		y_t & = x_t + v_t \\
	\end{aligned}, 
		\quad F  \in \{-1, 1\}.
\]
The weights in \eqref{eq:opnorm} are chosen to be $Q = R = P_0 = 1$. We generate data $\y^{N-1}$ by simulating the system with $F = 1$ and $w_t$, $v_t$ as independent Gaussian white noise with intensity $1$. For $N=5$ we find
\[
	P_{5,1} = P_{5, -1} = \numprint{\Pa}, \\
\]
\[
	\breve x_{5,1} = \numprint{\ya}, \quad \breve x_{5,-1} = \numprint{\yb},
\]
\[
	c_{5,1} = \numprint{\ca}, \quad c_{5,-1} = \numprint{\cb}.
\]

In Fig.~\ref{fig:optplot}, we illustrate \eqref{eq:minimax} for $N=5$ and the estimates. Note that $\gamma = \numprint{\ga}$ can be guaranteed for the minimax estimator, but not the Bayesian. Fig.~\ref{fig:gamma} contains a comparison between the smallest $\gamma_N$ so that \eqref{eq:opnorm} can be guaranteed for the minimax estimator and the Bayesian estimator when $N = 1\ldots 20$.
\begin{figure}
\vspace{1em}
	\centering
	\begin{tikzpicture}[
		declare function={
			Jfun(\x,\y,\c,\P,\g)=(\x-\y)^2/(1-\g^(-2)*\P)-(\g)^2*\c;
		}]
	\begin{axis}[%
			xmin = \xmin, 
			xmax = \xmax, 
			samples = 100, 
			xlabel = $\hat y_5$,
			ylabel = $J_5$,
			scale only axis,
			width = 7cm,
			height = 4cm,
			legend style={legend cell align=left, at={(0.7, 1)}, anchor=north}
			]
		\addplot[gray, dotted, ultra thick, domain = \xmin:\xmax] (\x, {Jfun(\x, \ya, \ca, \Pa, \ga)});
		\addlegendentry{$J^+_5$}
		\addplot[gray,dashed, ultra thick, domain = \xmin:\xmax] (\x, {Jfun(\x, \yb, \cb, \Pb, \ga)});
		\addlegendentry{$J^-_5$}
		\addplot[black, ultra thick, domain = \xmin:\xmax] (\x, {max(Jfun(\x, \ya, \ca, \Pa, \ga),Jfun(\x, \yb, \cb, \Pb, \ga)) });
		\addlegendentry{$J_5 = \max\{J^+_5, J^-_5\}$}
		\addplot[black, ultra thick, domain = \xmin:\xmax] (\x, {max(Jfun(\x, \ya, \ca, \Pa, \ga),Jfun(\x, \yb, \cb, \Pb, \ga)) });
		\addplot[black, dashed, thick, domain = \xmin:\xmax] {0};
		\draw[-stealth,thick, shorten >=5pt] (axis cs:\sp, -8) node[below, blue]{Minimax, $\gamma_5 = \ga$} -- (axis cs:\sp,\val);
		\draw[-stealth,thick, shorten >=5pt, green!40!black] (axis cs:-1.5, 20) node[above]{Bayesian} -- (axis cs:\spb,\valb);
		\addplot[mark=o, ultra thick, mark size = 4pt, blue] coordinates{(\sp, \val)};
		\addplot[mark=x, ultra thick, mark size = 4pt, green!40!black] coordinates{(\spb, \valb)};
	\end{axis}
\end{tikzpicture}%
	\caption{Illustration of the optimization problem \eqref{eq:minimax} for $N=5$, together with the minimax solution and the one given by a Bayesian multiple model estimator for $\gamma_N = \ga$. The minimax estimate has a guaranteed worst-case gain bound from disturbances to observer error lower than $1.51$, whereas the Bayesian estimator does not. Here $J^+_5 =|\hat y_5-\breve x_{5, 1}|^2_{(I- \gamma_5^{-2}P_{5,1})^{-1}} - c_{5,1}$ corresponds to $F = 1$, whereas $J^-_5$ (defined similarly) corresponds to $F = -1$. $J_5 = J_5(\y^5, 0, \hat y_5)$ is then equivalent to \eqref{eq:minimax}.}
	\label{fig:optplot}
\end{figure}
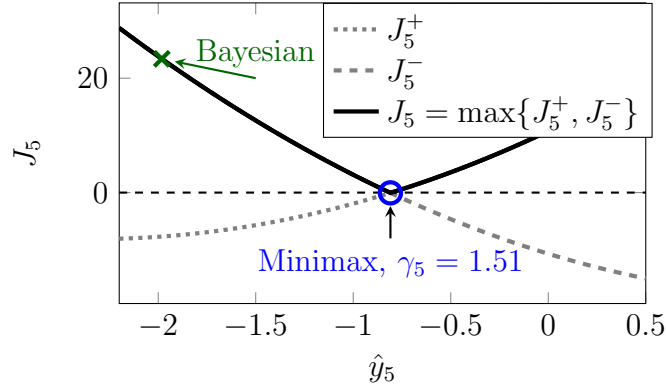

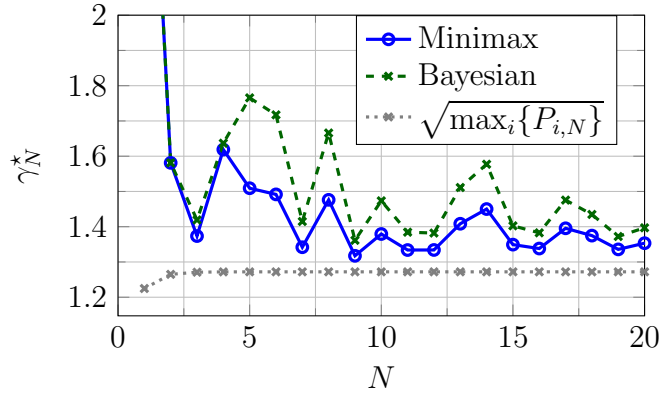
\begin{figure}
	\centering
	\pgfplotstableread[col sep=semicolon]{figures/gammasF1.csv}{\datatable}
\begin{tikzpicture}
                        \begin{axis}[%
                                xlabel={$N$},
                                ylabel={$\gamma_N^\star$},
                                scale only axis,
                                xmin=0,
                                xmax=20,
				ymax=2.0,
                                width=7cm,
                                height=4cm,
                                grid=both,
                                minor tick num=1,
                                legend style={legend cell align=left, at={(0.7, 1)}, anchor=north}]
                                        \addplot[mark=o, very thick, blue] table[x=t, y=gammamini] from \datatable;
					\addlegendentry{Minimax}
					\addplot[mark=x, mark options=solid, very thick, dashed, green!40!black] table[x=t, y=gammabayes] from \datatable;
                                        \addlegendentry{Bayesian}
					\addplot[mark=x, mark options=solid, very thick, dotted, gray] table[x=t, y=gammamin] from \datatable;
					\addlegendentry{$\sqrt{\max_i\{P_{i,N}\}}$}

\end{axis}%
\end{tikzpicture}%
~                 
	\caption{The smallest $\gamma_N$ such that $J_N(\y^{N-1}, 0, \hat y_N) \leq 0$ for the minimax estimator (blue) compared to the Bayesian multiple-model adaptive estimator (green) for one realization.}
	\label{fig:gamma}
\end{figure}
\section{CONCLUSIONS}
\label{sec:conclusions}
We stated the minimax criterion for output prediction, where the dynamics belong to a finite set of linear systems and proposed a minimax estimation strategy. The strategy can be implemented as a convex program, and the resulting estimate is a weighted interpolation of Kalman filter estimates. We showed in a numerical example how to apply the theoretical results to compute the worst-case gain from disturbances to error for any multi-model estimation algorithm online and how to generate estimates that minimize the said gain.

By running a minimax estimator in parallel to another estimator, we can measure the worst-case performance level of the other estimator.
A large difference in performance levels indicates that the nominal estimator may be highly sensitive to errors in the noise model.

Predetermining the smallest achievable gain from disturbances to estimation errors is still an open research problem, that is, finding necessary and sufficient conditions such that
	\[
		\sup_{\y^{N-1}} J_N^\star(\y^{N-1}, \u^{N-1}) \leq 0.
	\]
In future work, we plan to develop a Multiple-model adaptive estimator with a prescribed $\ell_2$-gain bound from disturbance to error and methods for infinite sets of linear systems.

\section*{APPENDIX --- SUPPORTING LEMMATA}
        \begin{lemma}
        The cost function
        \label{lemma:mvn_1}
        \begin{multline}
                V_{N,i}(x_N, \y^{N-1}) = \min_{\x^{N-1}}\Bigg\{|x_0 - \hat x_0|_{P_0^{-1}}^2 \\
		+ \sum_{k=1}^{N-1}(|x_{t+1} - F_ix_t - G_i u_t|_{Q^{-1}}^2 + |y_t - H_ix_t|_{R^{-1}}^2)\Bigg\}
        \label{eq:first_cost_1}
\end{multline}
        under the dynamics \eqref{eq:dynamics}, is of the form
        \[
		V_{t,i}(x, \y^{t-1}) = |x - \breve x_{t,i}|^2_{P_{t,i}} + c_{t,i},
        \]
		where $P_{t,i}$ and $c_{t,i}$ are generated as
        \begin{equation*}
                \begin{aligned}
                P_{0,i} & = P_0\\
                        P_{t+1,i} & = Q + F_iP_{t, i}F_i^\top \\ 
			& \quad - F_iP_{t,i}H_i^\top(R + H_iP_{t,i}H_i^\top)^{-1}H_i P_{t,i}F_i^\top \\
                \breve x_{0,i}       & = x_0  \\
			\breve{x}_{t+1,i}   & = F_i \breve{x}_{t,i} + K_{t,i}(y_t - H_i\breve{x}_{t,i}) +G_iu_t\\
                K_{t,i}     & = F_iP_{t,i}H_i^\top(R + H_i P_{t,i} H_i^\top)^{-1} \\
                c_{0,i} & = 0 \\
                c_{t+1,i} & = |H_i\breve x_{t,i} -y_t|^2_{(R + H_iP_{t,i}H_i^\top)^{-1}} + c_{t,i}.
        \end{aligned}
	\end{equation*}
        \end{lemma}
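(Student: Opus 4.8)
The plan is to prove the claim by forward dynamic programming, i.e.\ by induction on $t$, with the model index $i$ fixed throughout (and suppressed from the notation where convenient). Since the disturbances $w_t=x_{t+1}-F_ix_t$ and $v_t=y_t-H_ix_t$ are determined by the state trajectory, the minimization in \eqref{eq:first_cost_1} under the dynamics \eqref{eq:dynamics} is in fact an unconstrained quadratic minimization over $x_0,\dots,x_{N-1}$, so $V_t$ obeys the forward recursion
\begin{equation*}
	V_{t+1}(x,\y^{t+1}) = \min_{\xi}\bigl\{\, V_t(\xi,\y^t) + |x - F_i\xi|^2_{Q^{-1}} + |y_t - H_i\xi|^2_{R^{-1}} \,\bigr\}, \qquad V_0(x) = |x - \hat x_0|^2_{P_0^{-1}}.
\end{equation*}
The base case matches the ansatz $V_t(x) = |x - \breve x_{t,i}|^2_{P_{t,i}^{-1}} + c_{t,i}$ with $\breve x_{0,i} = \hat x_0$, $P_{0,i} = P_0$, $c_{0,i}=0$.

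For the induction step I would substitute the ansatz for $V_t$ into the recursion and minimize over $\xi$; the minimand is a strictly convex quadratic in $\xi$ because $P_{t,i}^{-1},Q^{-1},R^{-1}\succ 0$, and positive-definiteness of $P_{t,i}$ is maintained since the recursion yields $P_{t+1,i}\succeq Q\succ 0$. I would organize the computation in the two stages that mirror the measurement- and time-updates of a Kalman filter. First, complete the square in $\xi$ against the $\y^t$-dependent terms,
\begin{equation*}
	|\xi - \breve x_{t,i}|^2_{P_{t,i}^{-1}} + |y_t - H_i\xi|^2_{R^{-1}} = |\xi - m_t|^2_{\Sigma_t^{-1}} + |y_t - H_i\breve x_{t,i}|^2_{(R + H_iP_{t,i}H_i^\top)^{-1}},
\end{equation*}
with $\Sigma_t := (P_{t,i}^{-1} + H_i^\top R^{-1}H_i)^{-1}$ and $m_t := \breve x_{t,i} + \Sigma_tH_i^\top R^{-1}(y_t - H_i\breve x_{t,i})$. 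Second, minimize $|\xi - m_t|^2_{\Sigma_t^{-1}} + |x - F_i\xi|^2_{Q^{-1}}$ over $\xi$, which has value $|x - F_im_t|^2_{(Q + F_i\Sigma_tF_i^\top)^{-1}}$. Collecting the pieces gives $V_{t+1}(x) = |x - \breve x_{t+1,i}|^2_{P_{t+1,i}^{-1}} + c_{t+1,i}$ with $\breve x_{t+1,i} = F_im_t$, $P_{t+1,i} = Q + F_i\Sigma_tF_i^\top$, and $c_{t+1,i} = c_{t,i} + |y_t - H_i\breve x_{t,i}|^2_{(R + H_iP_{t,i}H_i^\top)^{-1}}$, which already reproduces the stated $c$-recursion.

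It then remains to rewrite $\breve x_{t+1,i}$ and $P_{t+1,i}$ in the asserted form, which is routine matrix algebra: the push-through identity $\Sigma_tH_i^\top R^{-1} = P_{t,i}H_i^\top(R + H_iP_{t,i}H_i^\top)^{-1}$ turns $m_t$ into $\breve x_{t,i} + P_{t,i}H_i^\top(R + H_iP_{t,i}H_i^\top)^{-1}(y_t - H_i\breve x_{t,i})$, so $\breve x_{t+1,i} = F_i\breve x_{t,i} + K_{t,i}(y_t - H_i\breve x_{t,i})$ with $K_{t,i} = F_iP_{t,i}H_i^\top(R + H_iP_{t,i}H_i^\top)^{-1}$; and the Woodbury identity $\Sigma_t = P_{t,i} - P_{t,i}H_i^\top(R + H_iP_{t,i}H_i^\top)^{-1}H_iP_{t,i}$ turns $P_{t+1,i} = Q + F_i\Sigma_tF_i^\top$ into the displayed Riccati recursion.

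The only genuine obstacle is keeping the bookkeeping organized; in particular the residual identity $\min_\xi\{|\xi - \breve x_{t,i}|^2_{P_{t,i}^{-1}} + |y_t - H_i\xi|^2_{R^{-1}}\} = |y_t - H_i\breve x_{t,i}|^2_{(R + H_iP_{t,i}H_i^\top)^{-1}}$ (the Schur-complement/complete-the-square identity underlying the $c$-update) is the one small auxiliary fact I would state and verify separately, together with the two standard matrix-inversion identities above. No conceptual difficulty is expected beyond this.
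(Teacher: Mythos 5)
Your proposal is correct and follows essentially the same route as the paper: forward dynamic programming with the quadratic ansatz $V_t(x)=|x-\breve x_{t,i}|^2_{P_{t,i}^{-1}}+c_{t,i}$, establishing the Riccati, observer and $c_{t,i}$ recursions by completing squares in $\xi$ at each step. The only difference is organizational --- you split the one-step minimization into a measurement update (via $\Sigma_t$, $m_t$) followed by a time update, while the paper minimizes over $\xi$ in one shot and then invokes the Woodbury/matrix-inversion identities; the two computations are algebraically equivalent.
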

\begin{proof}
	The proof builds on forward dynamic programming~\cite{Cox64}, and is similar to one given in~\cite{Goodwin05} but differ in the assumption that $F_i$ is not invertible. Further, the constant terms $c_{t,i}$ are explicitly computed. 
	The cost function $V_N$\footnote{We relax the index $i$ in this proof} can be computed recursively
\begin{align}
	V_1(x, \y^0)    & = |x-x_0|_{P_0^{-1}}^2 \label{eq:first_v0}\\
	V_{t+1}(x, \y^t)    & = \min_{\xi}|x-F\xi -Gu_t|_{Q^{-1}}^2 \nonumber \\
				& \qquad + |y_t-H\xi|_{R^{-1}}^2 + V_t(\xi,\y^{t-1}). \label{eq:first_vk}
\end{align}

With a slight abuse of notation, we assume a solution of the form $V_t(x) = |x - \breve{x}_t|_{P_t^{-1}} + c_t$ and solve for the minimum
        \begin{multline*}
		V_{t+1}(x) = \min_\xi |x - Gu_t|^2_{Q^{-1}} + |\xi|^2_{F^\top Q^{-1} F + H^\top R^{-1} H + P_t^{-1}} \\
		- 2(F^\top Q^{-1}(x-Gu_t) + H^\top R^{-1}y_t + P_t^{-1}\breve{x}_t)^\top\xi + |y_t|_{R^{-1}}^2 + |\breve{x}|_{P_t^{-1}}.
        \end{multline*}
        Assume at this stage $S_t := F^\top Q^{-1}F + H^\top R^{-1}H + P_t^{-1} \succ 0$, then the minimizing $\xi^\star$ is a stationary point
\[
	\xi^\star = S_t^{-1}(F^\top Q^{-1}(x-Gu_t) + H^\top R^{-1}y_t + P_t^{-1}\breve{x}_t)
\]
and the resulting partial cost
\begin{multline}
	|x-\breve{x}_{t+1}|^2_{P_{t+1}^{-1}} + c_{t+1} = |x-Gu_t|_{Q^{-1}}^2 + |y_t|_{R^{-1}}^2 + |\breve{x}_t|^2_{P_t^{-1}} \\
	 - |F^\top Q^{-1}(x-Gu_t) + H^\top R^{-1}y_t + P_t^{-1}\breve x_t|_{S_t^{-1}}^2 + c_t.
        \label{eq:PX}
\end{multline}
	Since this should hold for arbitrary $x$ and 
	\[
		x - \breve x_{t+1} = (x - Gu_t) - (\breve x_{t+1}-Gu_t),
	\]
	we get
\[
        \begin{aligned}
                P_{t+1}^{-1}    & = Q^{-1} - Q^{-1}FS_t^{-1}F^\top Q^{-1} \\
		\breve{x}_{t+1} - Gu_t   & = P_{t+1}Q^{-1}FS_t^{-1}(H^\top R^{-1}y_t + P_t^{-1} \breve x_t)
        \end{aligned}
\]
The expression for calculating $P_{t+1}$ can be further simplified using the Woodbury identity,
        \[
                \begin{aligned}
                        P_{t+1}^{-1}    & = (Q + F(H^\top R^{-1} H + P_t^{-1})^{-1} F^\top)^{-1}\\
                        P_{t+1}         & = Q + FP_tF^\top - FP_tH^\top(R + HP_t H^\top)^{-1}H P_t F^\top,
                \end{aligned}
        \]
        where we used the Woodbury matrix identity twice.
        Inserting these expressions into \eqref{eq:PX}, applying the Woodbury matrix identity to $S_t^{-1}F^\top (Q - FS_t^{-1}F^\top)^{-1}S_t^{-1}+ S_t^{-1} =(S_t - F^\top Q^{-1}F)^{-1} = (H^\top R^{-1}H + P_t^{-1})^{-1}$ gives 

        \begin{equation*}
                \begin{aligned}
                        c_{t+1} & = -|H^\top R^{-1}y_t + P_t^{-1}\breve{x}_t|_{(H^\top R^{-1}H + P_t^{-1})^{-1}}^2 +|y_t|_{R^{-1}}^2 + |\breve{x}_t|_{P_t^{-1}}^2 + c_t \\
                                & = |H\hat x_t - y_t|^2_{(R+HP_tH^\top)^{-1}} + c_t
                \end{aligned}
        \end{equation*}

        Next we show that $\breve x$ can be formulated as a state-observer
        \begin{align*}
		\breve x_{t+1} -Gu_t   & = P_{t+1}Q^{-1}FS_t^{-1}(H^\top R^{-1}y_t + P_t^{-1}\breve x) \\
                                & = P_{t+1}Q^{-1}FS_t^{-1}H^\top R^{-1}(y_t - H\breve x_t) \\
				&  \qquad + P_{t+1}Q^{-1}FS_t^{-1}(H^\top R^{-1}H + P_t^{-1})\breve x_t \\
	\end{align*}
                   Use the matrix inversion lemma $(A + BCD)^{-1}BC = A^{-1}B(C + DA^{-1}B)^{-1}$.
		   \begin{align*}
			  P_{t+1}Q^{-1}FS_t^{-1} & = -(-Q^{-1} + Q^{-1}FS_t^{-1}F^\top Q^{-1})^{-1}Q^{-1}FS_t^{-1} \\
                &  = -(-Q^{-1})^{-1}(Q^{-1}F)(S_t - F^\top Q^{-1}F)^{-1} \\
                &  = F(H^\top R^{-1}H + P_t^{-1})^{-1}.
        \end{align*}

        Insert in to the previous expression and conclude
        \[
		\breve x_{t+1}  = F\breve x_t + K_t(y_t - H\breve{x}) +Gu_t,
        \]
        where
        \[
                K_t = FP_t H^\top(R + HP_t H^\top)^{-1}
        \]
\end{proof}
\balance
\begin{lemma}
	\label{lemma:qp1}
	For $x \in \R^n$, $v$, $y \in \R^m$, a non-zero matrix $A\in \R^{n\times m}$, positive-definite matrices $X\in R^{n\times n}$ and $Y \in \R^{m^\times m}$, and a positive real number $\gamma_N > 0$ such that
	\[
		A^\top X^{-1}A - \gamma_N^2 Y^{-1} \prec 0,
	\]
	it holds that
	\begin{multline}
		\max_v\left\{ |x-Av|_{X^{-1}}^2 - \gamma_N^2|y - v|^2_{Y^{-1}}\right\} \\
		= |x - Ay|^2_{(X - \gamma_N^{-2}AYA^\top)^{-1}}.
		\label{eq:qp}
	\end{multline}
\end{lemma}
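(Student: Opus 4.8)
The plan is to turn the maximization into an unconstrained, strictly concave quadratic program in a shifted variable, solve it by setting the gradient to zero, and then recognize the optimal value as the right-hand side via the Woodbury identity.

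First I would substitute $z := v - y$, so that $y - v = -z$ and $x - Av = \tilde x - Az$ with $\tilde x := x - Ay$. The objective becomes $g(z) := |\tilde x - Az|^2_{X^{-1}} - \gamma^2|z|^2_{Y^{-1}} = \tilde x^\top X^{-1}\tilde x - 2z^\top A^\top X^{-1}\tilde x + z^\top M z$, where $M := A^\top X^{-1}A - \gamma^2 Y^{-1}$. By hypothesis $M \prec 0$, so $g$ is strictly concave with a unique maximizer $z^\star = M^{-1}A^\top X^{-1}\tilde x$; equivalently $v^\star = y + z^\star$, which upon inserting $A = H_i$, $X = I$, $Y = P_{N,i}$ reproduces the expression in the footnote of the proof of Theorem~\ref{thm:MME}. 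Using $M z^\star = A^\top X^{-1}\tilde x$, the optimal value simplifies to $g(z^\star) = \tilde x^\top\left(X^{-1} - X^{-1}A M^{-1}A^\top X^{-1}\right)\tilde x$.

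Second I would rewrite $M^{-1} = -(\gamma^2 Y^{-1} - A^\top X^{-1}A)^{-1}$ and apply the Woodbury matrix identity with $P = X$, $U = A$, $V = A^\top$, $C = -\gamma^{-2}Y$, obtaining $X^{-1} + X^{-1}A(\gamma^2 Y^{-1} - A^\top X^{-1}A)^{-1}A^\top X^{-1} = (X - \gamma^{-2}AYA^\top)^{-1}$. Hence $g(z^\star) = \tilde x^\top(X-\gamma^{-2}AYA^\top)^{-1}\tilde x = |x - Ay|^2_{(X-\gamma^{-2}AYA^\top)^{-1}}$, which is exactly \eqref{eq:qp}.

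The only step that needs a small separate argument — and the one I would call the main obstacle, though it is still routine — is checking that $X - \gamma^{-2}AYA^\top$ is invertible (indeed $\succ 0$), so that the right-hand side of \eqref{eq:qp} and the Woodbury step are legitimate. I would get this from Schur complements: $X \succ 0$ and, by hypothesis, $\gamma^2 Y^{-1} - A^\top X^{-1}A \succ 0$ are the two Schur complements of the block matrix $\bmat{X & A \\ A^\top & \gamma^2 Y^{-1}}$ being positive, which makes the block matrix positive definite, and therefore its remaining Schur complement $X - A(\gamma^2 Y^{-1})^{-1}A^\top = X - \gamma^{-2}AYA^\top$ is positive definite as well.
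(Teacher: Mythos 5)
Your proof is correct and follows essentially the same route as the paper's: maximize the strictly concave quadratic by setting the gradient to zero and then collapse the resulting quadratic form with the Woodbury identity. The shift $z := v - y$ merely streamlines the algebra (the paper completes the square in $v$ directly), and your Schur-complement argument that $X - \gamma^{-2}AYA^\top \succ 0$ is a correct justification of a step the paper leaves implicit.
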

\begin{proof}
	Expanding the left-hand side of \eqref{eq:qp} and equating the gradient with $0$ we get
	\begin{align*}
		&\max_v\left\{ |x-Av|_{X^{-1}}^2 - \gamma_N^2|y - v|^2_{Y^{-1}}\right\} \\
		&= \max_v\Big\{|v|^2_{A^\top X^{-1} A - \gamma_N^2Y} + |x|^2_{X^{-1}} - \gamma_N^2 |y|^2_{Y^{-1}}
		 - 2v^\top(A^\top X^{-1}x - \gamma_N^2 Y^{-1})y \Big\} \\
		&= |x|^2_{X^{-1}} - \gamma_N^2|y|^2_{Y^{-1}}
		- |A^\top X^{-1}x - \gamma_N^2Y^{-1}y|_{(A^\top X^{-1}A - \gamma_N^2Y^{-1})^{-1}} \\
		&= |x|^2_{X^{-1} - X^{-1}A^\top(A^\top X^{-1}A - \gamma_N^2 Y^{-1})^{-1}A^\top X^{-1}} \\
		&\qquad +|y|^2_{-\gamma_N^2 Y^{-1} - \gamma_N^2 Y^{-1}(A^\top X^{-1}A - \gamma_N^2Y^{-1})^{-1}Y^{-1}\gamma_N^2}\\
		&\qquad -2x^\top X^{-1}A(A^\top X^{-1}A - \gamma_N^2 Y^{-1})^{-1}(-\gamma_N^2Y^{-1})y\\
		&= |x|^2_{(X - \gamma_N^{-2}AYA^\top)^{-1}}+ |Ay|^2_{(X - \gamma_N^{-2}AYA^\top)^{-1}}
		- 2x^\top(X - \gamma_N^{-2}AYA^\top)^{-1}Ay \\
		&= |x - Ay|^2_{(X - \gamma_N^{-2}AYA^\top)^{-1}}.
	\end{align*}
\end{proof}

\bibliographystyle{plainnat}
\bibliography{references}

\begin{thebibliography}{18}
\providecommand{\natexlab}[1]{#1}
\providecommand{\url}[1]{\texttt{#1}}
\expandafter\ifx\csname urlstyle\endcsname\relax
  \providecommand{\doi}[1]{doi: #1}\else
  \providecommand{\doi}{doi: \begingroup \urlstyle{rm}\Url}\fi

\bibitem[{Ackerson} and {Fu}(1970)]{Ackerson1970}
G.~{Ackerson} and K.~{Fu}.
\newblock On state estimation in switching environments.
\newblock \emph{IEEE Transactions on Automatic Control}, 15\penalty0
  (1):\penalty0 10--17, 1970.
\newblock \doi{10.1109/TAC.1970.1099359}.

\bibitem[Akca and Önder Efe(2019)]{Akca2019}
Alper Akca and M.~Önder Efe.
\newblock Multiple model kalman and particle filters and applications: A
  survey.
\newblock \emph{IFAC-PapersOnLine}, 52\penalty0 (3):\penalty0 73--78, 2019.
\newblock ISSN 2405-8963.
\newblock \doi{https://doi.org/10.1016/j.ifacol.2019.06.013}.
\newblock URL
  \url{https://www.sciencedirect.com/science/article/pii/S2405896319300977}.
\newblock 15th IFAC Symposium on Large Scale Complex Systems LSS 2019.

\bibitem[Bar-Shalom(1972)]{Bar1972}
Y.~Bar-Shalom.
\newblock Optimal simultaneous state estimation and parameter identification in
  linear discrete-time systems.
\newblock \emph{IEEE Transactions on Automatic Control}, 17\penalty0
  (3):\penalty0 308--319, 1972.
\newblock \doi{10.1109/TAC.1972.1100005}.

\bibitem[{Basar} and {Bernhard}(1995)]{Basar95}
T.~{Basar} and P.~{Bernhard}.
\newblock \emph{{$H_\infty$}-Optimal Control and Related Minimax Design
  Problems --- A dynamic Game Approach}.
\newblock Birkhauser, 1995.

\bibitem[{Blom} and {Bar-Shalom}(1988)]{Blom1988}
H.~A.~P. {Blom} and Y.~{Bar-Shalom}.
\newblock The interacting multiple model algorithm for systems with markovian
  switching coefficients.
\newblock \emph{IEEE Transactions on Automatic Control}, 33\penalty0
  (8):\penalty0 780--783, 1988.
\newblock \doi{10.1109/9.1299}.

\bibitem[{Chang} and {Athans}(1978)]{Chang1978}
C.~B. {Chang} and M.~{Athans}.
\newblock State estimation for discrete systems with switching parameters.
\newblock \emph{IEEE Transactions on Aerospace and Electronic Systems},
  AES-14\penalty0 (3):\penalty0 418--425, 1978.
\newblock \doi{10.1109/TAES.1978.308603}.

\bibitem[{Cox}(1964)]{Cox64}
H.~{Cox}.
\newblock On the estimation of state variables and parameters for noisy dynamic
  systems.
\newblock \emph{IEEE Transactions on Automatic Control}, 9\penalty0
  (1):\penalty0 5--12, 1964.

\bibitem[Crassidis and Junkins(2011)]{Crassidis2011}
John~L. Crassidis and John~L. Junkins.
\newblock \emph{Optimal Estimation of Dynamic Systems, Second Edition (Chapman
  \& Hall/CRC Applied Mathematics \& Nonlinear Science)}.
\newblock Chapman \& Hall/CRC, 2nd edition, 2011.
\newblock ISBN 1439839859.

\bibitem[Deng et~al.(2020)Deng, Li, and Li]{Deng2020}
Lichuan Deng, Da~Li, and Ruifang Li.
\newblock Improved {IMM} algorithm based on {RNNs}.
\newblock \emph{Journal of Physics: Conference Series}, 1518:\penalty0 012055,
  apr 2020.
\newblock \doi{10.1088/1742-6596/1518/1/012055}.
\newblock URL \url{https://doi.org/10.1088/1742-6596/1518/1/012055}.

\bibitem[{Goodwin} et~al.(2005){Goodwin}, {De Dona}, and {Seron}]{Goodwin05}
G.~C. {Goodwin}, J.~A. {De Dona}, and M.~M. {Seron}.
\newblock \emph{Constrained Control and Estimation --- An Optimization
  Approach}.
\newblock Springer-Verlag, 2005.

\bibitem[Goodwin and Sin(1984)]{Goodwin1984}
Graham~Clifford Goodwin and Kwai~Sang Sin.
\newblock \emph{Adaptive filtering prediction and control / Graham C. Goodwin
  and Kwai Sang Sin.}
\newblock Prentice-Hall information and system sciences series. Prentice-Hall,
  Englewood Cliffs, N.J., 1984.
\newblock ISBN 013004069X.

\bibitem[{Lainiotis}(1976)]{Lainiotis1971}
D.~G. {Lainiotis}.
\newblock Partitioning: A unifying framework for adaptive systems, i:
  Estimation.
\newblock \emph{Proceedings of the IEEE}, 64\penalty0 (8):\penalty0 1126--1143,
  1976.
\newblock \doi{10.1109/PROC.1976.10284}.

\bibitem[Li et~al.(2021)Li, Zhang, and Li]{Li2021}
Da~Li, Pei Zhang, and Ruifang Li.
\newblock Improved {IMM} algorithm based on {XGBoost}.
\newblock \emph{Journal of Physics: Conference Series}, 1748:\penalty0 032017,
  jan 2021.
\newblock \doi{10.1088/1742-6596/1748/3/032017}.
\newblock URL \url{https://doi.org/10.1088/1742-6596/1748/3/032017}.

\bibitem[{Magill}(1965)]{Magill1965}
D.~{Magill}.
\newblock Optimal adaptive estimation of sampled stochastic processes.
\newblock \emph{IEEE Transactions on Automatic Control}, 10\penalty0
  (4):\penalty0 434--439, 1965.
\newblock \doi{10.1109/TAC.1965.1098191}.

\bibitem[Rantzer(2021)]{Rantzer2021}
Anders Rantzer.
\newblock Minimax adaptive control for a finite set of linear systems, 2021.

\bibitem[Ronghua et~al.(2008)Ronghua, Zheng, Xiangnan, and Junliang]{Guo2008}
Guo Ronghua, Qin Zheng, Li~Xiangnan, and Chen Junliang.
\newblock Interacting multiple model particle-type filtering approaches to
  ground target tracking.
\newblock \emph{Journal of Computers}, 3, 07 2008.
\newblock \doi{10.4304/jcp.3.7.23-30}.

\bibitem[{Shen} and {Deng}(1997)]{Shen97}
X.~{Shen} and L.~{Deng}.
\newblock Game theory approach to discrete {$H_\infty$} filter design.
\newblock \emph{IEEE Transactions on Signal Processing}, 45\penalty0
  (4):\penalty0 1092--1095, 1997.

\bibitem[Xiong et~al.(2015)Xiong, Wei, and Liu]{Xiong2015}
K.~Xiong, C.L. Wei, and L.D. Liu.
\newblock Robust multiple model adaptive estimation for spacecraft autonomous
  navigation.
\newblock \emph{Aerospace Science and Technology}, 42:\penalty0 249--258, 2015.
\newblock ISSN 1270-9638.
\newblock \doi{https://doi.org/10.1016/j.ast.2015.01.021}.
\newblock URL
  \url{https://www.sciencedirect.com/science/article/pii/S1270963815000371}.

\end{thebibliography}

\end{document}